\begin{document}

\setlist{noitemsep}  % Reduce space between list items (itemize, enumerate, etc.)
%\onehalfspacing      % Use 1.5 spacing
% Use endnotes instead of footnotes - redefine \footnote command

\title{A few properties of sample variance}

\author{Eric Benhamou \thanks{A.I. SQUARE CONNECT, 35 Boulevard d'Inkermann 92200 Neuilly sur Seine, France}  
\textsuperscript{,} 
\thanks{LAMSADE, Université Paris Dauphine, Place du Maréchal de Lattre de Tassigny,75016 Paris, France} 
\textsuperscript{,} 
\thanks{E-mail: eric.benhamou@aisquareconnect.com, eric.benhamou@dauphine.eu}
}

\date{}              % No date for final submission

\singlespacing

\maketitle

\vspace{-.2in}
\begin{abstract}
A basic result is that the sample variance for i.i.d. observations is an unbiased estimator of the variance of the underlying distribution (see for instance \cite{Casella_2002}). But what happens if the observations are neither independent nor identically distributed. What can we say? Can we in particular compute explicitly the first two moments of the sample mean and hence generalize formulae provided in \cite{Tukey_1957a}, \cite{Tukey_1957b}  for the first two moments of the sample variance? We also know that the sample mean and variance are independent if they are computed on an i.i.d. normal distribution. This is one of the underlying assumption to derive the Student distribution \cite{Student_1908}. But does this result hold for any other underlying distribution? Can we still have independent sample mean and variance if the distribution is not normal? This paper precisely answers these questions and extends previous work of \cite{Cho_2004}. We are able to derive a general formula for the first two moments and variance of the sample variance under no specific assumptions. We also provide a faster proof of a seminal result of \cite{Lukacs_1942} by using the log characteristic function of the unbiased sample variance estimator.
\end{abstract}

\medskip

\noindent \textit{AMS 1991 subject classification:} 62E10, 62E15

\medskip
\noindent \textit{Keywords}: sample variance, variance of sample variance, independence between sample mean and variance

\thispagestyle{empty}

\clearpage

\onehalfspacing
\setcounter{footnote}{0}
\renewcommand{\thefootnote}{\arabic{footnote}}
\setcounter{page}{1}

%%%%%%%%%%%
%%% INTRODUCTION
%%%%%%%%%%%

\section{Introduction}
Let $X_1, \ldots, X_n$ be a random sample and define the sample variance statistic as:
\begin{equation}\label{definition}
\bar{X}_n =\frac{1}{n}\sum_{i=1}^{n}X_i, \quad s_n^2 = \frac{1}{n-1}\sum_{i=1}^{n}(X_i - \bar{X}_n)^2, \quad X_n = (X_1, \ldots, X_n)^T
\end{equation}
where $\bar X_n $ is the empirical mean, $s_n^2$ the empirical Bessel corrected empirical variance also called sample variance, and $X_n$ the vector of the full history of this random sample. 

We are interested in the first two moments of the sample variance as well as its relationship with the sample mean. A basic result is that the sample variance for i.i.d. observations is an unbiased estimator of the variance of the underlying distribution. But what happens if the observations are neither independent nor identically distributed. What can we say?

Can we in particular compute explicitly the first two moments of the sample variance without any particular assumptions on the sample? Can we generalize standard formula for the first two moments of the sample variance as provided in \cite{Tukey_1957a}, \cite{Tukey_1957b}. We also know that the sample mean and variance are independent if they are computed from an i.i.d. normal distribution. But what about any other underlying distribution? Can we still have independent sample mean and variance if the distribution is not normal for an i.i.d. sample? These are the motivations of this paper. This paper extends classical statistical results found in \cite{Cho_2004} but also 
\cite{Tukey_1950}, \cite{Tukey_1956}, \cite{Tukey_1957a}, \cite{Tukey_1957b}. It is organized as follows. First we derive the fist two moments for the sample variance. We then examine the condition for the sample mean and variance to be independent. We show that it is only in the specific case of an underlying normal distribution that they are independent. We conclude on possible extensions.

%%%%%%%%%%%%%%%%%%%%%%%%%%%%%%%%%%%%
\section{Moment properties}
\subsection{symmetrical form of the sample variance}
A first property that will be useful in the rest of th paper is the writing of the sample variance as a "U-statistic" (or symmetric) form as given by the following lemma

\begin{lemma}\label{lemma_u_stat}
The sample variance can be defined as the average of the kernel $h(x_1,x_2) = (x_1-x_2)^2/2$ over all $n (n-1)$ pairs of observations $(X_i, X_j)$  for $i\neq j$:
\begin{equation}\label{lemma_eq1}
s_n^2 = \frac{1}{n(n-1)} \sum_{i,j=1}^{n} \frac{(X_i-X_j)^2}{2} = \frac{1}{n(n-1)} \sum_{i \neq j}\frac{(X_i-X_j)^2}{2}
\end{equation}
\end{lemma}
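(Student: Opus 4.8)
The plan is to prove the identity by direct algebraic expansion, exploiting the fact that the diagonal terms $i=j$ vanish so that the sum over all ordered pairs coincides with the sum over pairs with $i\neq j$. First I would write $X_i-X_j=(X_i-\bar X_n)-(X_j-\bar X_n)$ and expand the square, or equivalently expand $(X_i-X_j)^2=X_i^2-2X_iX_j+X_j^2$ directly; the second route is marginally cleaner. Summing the latter over $i,j=1,\dots,n$ gives
\begin{equation*}
\sum_{i,j=1}^n (X_i-X_j)^2 = n\sum_{i=1}^n X_i^2 - 2\Bigl(\sum_{i=1}^n X_i\Bigr)^2 + n\sum_{j=1}^n X_j^2 = 2n\sum_{i=1}^n X_i^2 - 2n^2\bar X_n^2 ,
\end{equation*}
using $\sum_i X_i = n\bar X_n$.

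Next I would divide by $2n(n-1)$ to obtain
\begin{equation*}
\frac{1}{n(n-1)}\sum_{i,j=1}^n \frac{(X_i-X_j)^2}{2} = \frac{1}{n-1}\Bigl(\sum_{i=1}^n X_i^2 - n\bar X_n^2\Bigr),
\end{equation*}
and then invoke the elementary identity $\sum_{i=1}^n (X_i-\bar X_n)^2 = \sum_{i=1}^n X_i^2 - n\bar X_n^2$ (itself a one-line expansion of the square) to recognize the right-hand side as $s_n^2$ from definition \eqref{definition}.

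Finally I would note that the passage from $\sum_{i,j=1}^n$ to $\sum_{i\neq j}$ is immediate because the kernel $h(x_1,x_2)=(x_1-x_2)^2/2$ satisfies $h(x,x)=0$, so the $n$ diagonal terms contribute nothing; this justifies the second equality in \eqref{lemma_eq1}. There is no real obstacle here — the only thing to be careful about is bookkeeping of the cross term $\sum_{i,j}X_iX_j=(\sum_i X_i)^2$ and the factor-of-two coming from the $1/2$ in the kernel, so I would make sure those constants are tracked explicitly rather than absorbed silently.
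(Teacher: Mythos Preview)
Your proposal is correct and follows essentially the same route as the paper: both expand $(X_i-X_j)^2=X_i^2-2X_iX_j+X_j^2$, sum over all pairs using $\sum_{i,j}X_iX_j=(n\bar X_n)^2$, reduce to $\frac{1}{n-1}\bigl(\sum_i X_i^2-n\bar X_n^2\bigr)$, and match this with $s_n^2$ via the one-line identity $\sum_i(X_i-\bar X_n)^2=\sum_i X_i^2-n\bar X_n^2$, while noting separately that the diagonal terms $i=j$ vanish. The only cosmetic difference is ordering: the paper establishes the auxiliary identity first and then expands the double sum, whereas you expand first and invoke the identity at the end.
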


\begin{proof} See proof \ref{proof_u_stat}
\end{proof}

This symmetric form for the sample variance helps us computing the various moments of the sample variance.  Denoting by $\mu_k = \mathbb{E}\left[ X^k \right]$ the various moment of the variable $X$ and assuming that $(X_i)_{i=1,\ldots, n}$ are n observations of the variable $X$ (not necessarily independent), we can start computing the sample variance moments.

\subsection{First moment of sample variance}
\begin{lemma}\label{moments_1}
The expectation of the sample variance is given by:
\begin{equation}\label{lemma_mom_eq1}
\mathbb{E}\left[ s_n^2 \right]  =  \frac{\sum_{i=1}^n \mathbb{E} \left[ X_i^2 \right] }{n}  - \frac{  \sum_{i \neq j} \mathbb{E}\left[X_i X_j \right] }{n(n-1)} 
\end{equation}
Hence if $(X_i)_{i=1,\ldots,n}$ is independent and identically distributed, we get that $s_n^2$ is an unbiased estimator of the variance :
\begin{equation}\label{lemma_mom_eq2}
\mathbb{E}\left[ s_n^2 \right]= \mu_2 -  \mu_1^2 = \text{Var}[X]
\end{equation}
where $\mu_2 = \mathbb{E}\left[ X^2 \right]$ and $\mu_1 = \mathbb{E}\left[ X \right]$.
\end{lemma}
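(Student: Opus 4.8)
The plan is to start from the symmetric (U-statistic) representation of the sample variance established in Lemma \ref{lemma_u_stat}, namely $s_n^2 = \frac{1}{n(n-1)}\sum_{i\neq j}\frac{(X_i-X_j)^2}{2}$, and simply take expectations on both sides. By linearity of expectation, $\mathbb{E}[s_n^2] = \frac{1}{n(n-1)}\sum_{i\neq j}\frac{1}{2}\mathbb{E}[(X_i-X_j)^2]$. Expanding the square gives $\mathbb{E}[(X_i-X_j)^2] = \mathbb{E}[X_i^2] - 2\mathbb{E}[X_iX_j] + \mathbb{E}[X_j^2]$, so the summand is $\frac{1}{2}(\mathbb{E}[X_i^2]+\mathbb{E}[X_j^2]) - \mathbb{E}[X_iX_j]$.

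Next I would reorganize the double sum over $i\neq j$. The cross term $\sum_{i\neq j}\mathbb{E}[X_iX_j]$ contributes the second term of \eqref{lemma_mom_eq1} directly. For the quadratic terms, by the symmetry $i\leftrightarrow j$ one has $\sum_{i\neq j}\frac{1}{2}(\mathbb{E}[X_i^2]+\mathbb{E}[X_j^2]) = \sum_{i\neq j}\mathbb{E}[X_i^2]$, and for each fixed $i$ there are exactly $n-1$ values of $j\neq i$, so this equals $(n-1)\sum_{i=1}^n\mathbb{E}[X_i^2]$. Dividing by $n(n-1)$ yields $\frac{1}{n}\sum_{i=1}^n\mathbb{E}[X_i^2]$, which is the first term of \eqref{lemma_mom_eq1}. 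This establishes the general formula.

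For the i.i.d. specialization, I would note that under that assumption $\mathbb{E}[X_i^2] = \mu_2$ for every $i$, so the first term collapses to $\mu_2$; and for $i\neq j$ independence gives $\mathbb{E}[X_iX_j] = \mathbb{E}[X_i]\mathbb{E}[X_j] = \mu_1^2$, so the second term becomes $\frac{1}{n(n-1)}\cdot n(n-1)\mu_1^2 = \mu_1^2$. Hence $\mathbb{E}[s_n^2] = \mu_2 - \mu_1^2 = \operatorname{Var}[X]$, which is \eqref{lemma_mom_eq2}.

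There is no real obstacle here: the entire argument is linearity of expectation plus careful bookkeeping of the index set $\{(i,j): i\neq j\}$. The only point requiring the slightest care is the symmetrization step that reduces $\frac{1}{2}(\mathbb{E}[X_i^2]+\mathbb{E}[X_j^2])$ summed over ordered pairs to $(n-1)\sum_i\mathbb{E}[X_i^2]$; alternatively one could avoid the U-statistic form altogether and expand $(X_i-\bar X_n)^2$ directly from the definition \eqref{definition}, but routing through Lemma \ref{lemma_u_stat} keeps the computation cleanest and reuses the machinery the paper has already set up.
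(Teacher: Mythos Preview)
Your proposal is correct and follows essentially the same approach as the paper: both start from the U-statistic representation of Lemma~\ref{lemma_u_stat}, expand $(X_i-X_j)^2$, reduce the double sum over $i\neq j$ to $(n-1)\sum_i X_i^2 - \sum_{i\neq j} X_iX_j$, and then take expectations, with the i.i.d.\ case handled identically. The only cosmetic difference is that the paper simplifies the random variable $s_n^2$ algebraically before applying $\mathbb{E}$, whereas you apply $\mathbb{E}$ first and then simplify; the bookkeeping is the same.
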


\begin{proof} See proof \ref{proof_mom_1}
\end{proof}

This lemma calls various remarks. First of all, the fact that for iid sample, the sample variance is unbiased is very well know (see for instance \cite{Casella_2002}). Secondly, the cross term $\mathbb{E}\left[X_i X_j \right]$ implies that this estimator will not be unbiased for correlated sample as the expectation can rewite as
\begin{equation}
\mathbb{E}\left[ s_n^2 \right]  =  \frac{\sum_{i=1}^n \mathbb{E} \left[ X_i^2 \right] }{n}  - \frac{  \sum_{i \neq j} \mathbb{E}\left[X_i  \right] \mathbb{E}\left[ X_j \right] }{n(n-1)} -  \frac{  \sum_{i \neq j} \left( \mathbb{E}\left[X_i X_j \right] -\mathbb{E}\left[X_i  \right] \mathbb{E}\left[ X_j \right] \right)  }{n(n-1)} 
\end{equation}
Hence for a non independent sample the term $\mathbb{E}\left[X_i X_j \right] -\mathbb{E}\left[X_i  \right] \mathbb{E}\left[ X_j \right] $ does not cancel, while the first and second terms can be interpreted as estimator of the second and first moment of the sample. More generally, the interest of this general lemma is its application to non independent and non identically distributed samples. 

\subsection{application to AR(1)}
Let us apply our result to a non independent sample. For instance, assume that the sample $(X_i)_{i=1,\ldots,n}$ is generated by an auto regressive process of order 1 (AR(1)). We impose that the process is stationary with mean 0, variance $\frac{\sigma^2 }{1-\rho^2}$ where $\sigma$ is the variance of the underlying noise and $\rho$ is the first order correlation. In this specific case, our general formula provides the expectation of the sample variance. We find that the sample variance is biased and given by
\begin{equation}
\mathbb{E}\left[ s_n^2 \right] = \frac{\sigma^2 }{1-\rho^2} \left( 1 - \frac{ 2 \rho}{(1-\rho)(n-1)} + \frac{2 \rho (1-\rho^n) }{n (n-1)(1-\rho^2)}\right)
\end{equation}

\begin{proof} See proof \ref{proof_mom_1_app}
\end{proof}

\subsection{Second moment of sample variance}
\begin{lemma}\label{moments_2}
The second moment of the sample variance is given by:
\begin{equation}\label{lemma_mom_mom2_1}
\mathbb{E}\left[ s_n^4 \right]=\frac{\mathbb{E}[\hat\mu_4]}{n} - \frac{4 \mathbb{E}[\hat \mu_1 \hat \mu_3]}{n} + \frac{ (n^2-2 n+3)\mathbb{E}[\hat \mu_2^2]}{n(n-1)} - \frac{2(n-2)(n-3)\mathbb{E}[\hat\mu_1^2 \hat\mu_2]}{n(n-1)} + \frac{(n-2)(n-3) \mathbb{E}[\hat\mu_1^4]}{n(n-1)}
\end{equation}

where we have adopted the following notation:
\begin{align} \label{notations}
\hat{\mu}_4 &= \frac{\sum_{i=1}^n X_i^4 }{n} & \hat{\mu}_3 \hat{\mu}_1 &= \frac{ \sum_{i \neq j } X_i^3 X_j }{n (n-1)}  & \hat{\mu}_2 ^2 & = \frac{ \sum_{i \neq j} X_i^2 X_j^2 }{n (n-1)} \nonumber \\
\hat{\mu}_1^2 \hat{\mu}_2 & = \frac{\sum_{i \neq j \neq k} X_i^2 X_j X_k }{n (n-1)(n-2) } & \hat{\mu}_1^4 & =  \frac{\sum_{i \neq j \neq k \neq l} X_i X_j X_k  X_l }{n (n-1)(n-2) (n-3)} &  
\end{align}
\end{lemma}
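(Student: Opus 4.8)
The plan is to start from the U-statistic representation of Lemma~\ref{lemma_u_stat}, which gives
\[
s_n^2 = \frac{1}{n(n-1)} \sum_{i \neq j} \frac{(X_i-X_j)^2}{2},
\]
and square it. Squaring produces a double sum over pairs $(i,j)$ and $(k,l)$ with $i\neq j$, $k\neq l$, so
\[
s_n^4 = \frac{1}{4 n^2 (n-1)^2} \sum_{\substack{i\neq j\\ k\neq l}} (X_i-X_j)^2 (X_k-X_l)^2.
\]
Expanding $(X_i-X_j)^2(X_k-X_l)^2 = (X_i^2 - 2 X_i X_j + X_j^2)(X_k^2 - 2 X_k X_l + X_l^2)$ gives nine monomial types in the four indices $i,j,k,l$. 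The key bookkeeping step is to partition the index set $\{(i,j,k,l) : i\neq j,\ k\neq l\}$ according to which of the indices coincide: all four distinct; exactly one coincidence among the cross pairs (e.g.\ $i=k$ only); two coincidences (e.g.\ $\{i,j\}=\{k,l\}$ as sets, in two ways); and so on. For each coincidence pattern the monomial $X_i X_j X_k X_l$ collapses to one of the symmetric power-sum forms $\hat\mu_4$, $\hat\mu_3\hat\mu_1$, $\hat\mu_2^2$, $\hat\mu_1^2\hat\mu_2$, $\hat\mu_1^4$ defined in \eqref{notations}, and I would tabulate the integer multiplicity with which each arises, as a polynomial in $n$.

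Concretely, I would organize the computation as follows. First, fix the expansion of the product into the nine terms $X_i^2 X_k^2$, $-2 X_i^2 X_k X_l$, $X_i^2 X_l^2$, and their mirror images. Second, for each of these nine terms, sum over $\{i\neq j\} \times \{k\neq l\}$ by splitting on the equality pattern of the free indices; since $j$ (resp.\ $l$) may or may not equal $k,l$ (resp.\ $i,j$), each term contributes to several of the five symmetric statistics. Third, collect the coefficients: each symmetric statistic in \eqref{notations} carries a normalization like $n(n-1)$ or $n(n-1)(n-2)(n-3)$, so after dividing by $4n^2(n-1)^2$ and multiplying back the normalizations, one gets rational functions of $n$ that should simplify to exactly the coefficients $\tfrac1n$, $-\tfrac4n$, $\tfrac{n^2-2n+3}{n(n-1)}$, $-\tfrac{2(n-2)(n-3)}{n(n-1)}$, $\tfrac{(n-2)(n-3)}{n(n-1)}$ claimed in \eqref{lemma_mom_mom2_1}. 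Finally, take expectations termwise, which is legitimate since the sums are finite and (implicitly) the fourth moments are assumed finite; note that nowhere is independence or identical distribution used, so $\mathbb{E}[\hat\mu_4]$ etc.\ stay inside the expectation and cannot be further factored.

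The main obstacle is purely combinatorial: correctly enumerating the coincidence patterns of $(i,j,k,l)$ subject to $i\neq j$ and $k\neq l$, and not double-counting when I rewrite an unordered monomial such as $X_i^2 X_k^2$ (with $i\neq k$) in terms of $\hat\mu_2^2$, whose defining sum $\sum_{i\neq j} X_i^2 X_j^2$ already runs over ordered pairs. A convenient sanity check is to specialize to the i.i.d.\ case, where \eqref{lemma_mom_mom2_1} must reduce to the classical Tukey formula for $\operatorname{Var}[s_n^2]$ once combined with Lemma~\ref{moments_1}; matching that limit pins down any stray factor of $2$. I expect the cross terms carrying the coefficient $-4/n$ and the mixed term with $-2(n-2)(n-3)/(n(n-1))$ to be where sign and multiplicity errors are most likely, so I would verify those two coefficients independently by a small-$n$ numerical check (e.g.\ $n=4$) before finalizing.
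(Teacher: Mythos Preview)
Your plan is correct and is in substance the same ``square and sort by index-coincidence pattern'' computation the paper carries out; the only difference is a bookkeeping shortcut. Before squaring, the paper first collapses the pair sum to power sums,
\[
s_n^2=\frac{1}{n(n-1)}\Bigl((n-1)\sum_{i} X_i^2-\sum_{i\neq j}X_iX_j\Bigr),
\]
and then expands the square as three blocks $(\sum X_i^2)^2$, $(\sum_{i\neq j}X_iX_j)^2$, and the cross product, each of which is decomposed by index coincidences into the five symmetric statistics $\hat\mu_4,\hat\mu_3\hat\mu_1,\hat\mu_2^2,\hat\mu_1^2\hat\mu_2,\hat\mu_1^4$. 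Your route---squaring the kernel form $\sum_{i\neq j}(X_i-X_j)^2/2$ directly and tracking nine monomials across all coincidence patterns of $(i,j,k,l)$ with $i\neq j$, $k\neq l$---reaches the same destination but with more case-splitting; the paper's preliminary rewriting effectively performs part of that case analysis up front and makes the multiplicities (e.g.\ the $3+(n-1)^2$ in front of $\sum_{i\neq j}X_i^2X_j^2$) easier to read off. Either way, taking expectations termwise at the end is exactly how the paper concludes.
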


\begin{proof} See proof \ref{proof_mom_2}
\end{proof}

Like previously, the expression for the second moment of the sample mean is very general and an extension of previous results. Its interest is precisely to apply without any restriction on the underlying observation. This generalizes in particular \cite{Cho_2004}, but also \cite{Tukey_1950}, \cite{Tukey_1956}, \cite{Tukey_1957a}, and \cite{Tukey_1957b}.

\subsection{Variance of sample variance}
\begin{lemma}\label{lemma_mom_mom2_22}
The variance of the sample variance is given by:
\begin{eqnarray}
\text{Var}\left[ s_n^2 \right]& =& \frac{\mathbb{E}[\hat\mu_4]}{n} - \frac{4 \mathbb{E}[\hat \mu_1 \hat \mu_3]}{n} + \frac{ (n^2-2 n+3)\mathbb{E}[\hat \mu_2^2]}{n(n-1)} - ( \mathbb{E}[\tilde{\mu}_2])^2   \nonumber \\
& & \quad \quad - \frac{2(n-2)(n-3)\mathbb{E}[\hat\mu_1^2 \hat\mu_2]}{n(n-1)}  +2   \mathbb{E}[\tilde{\mu}_1^2]  \mathbb{E}[\tilde{\mu}_2] \nonumber  \\
& & \quad \quad + \frac{(n-2)(n-3) \mathbb{E}[\hat\mu_1^4]}{n(n-1)} -  (\mathbb{E}[\tilde{\mu}_1^2])^2 \label{lemma_mom_eq22}
\end{eqnarray}
where $\tilde{\mu}_2  = \cfrac{ \sum_{i} X_i^2 }{n}  $ and $ \tilde{\mu}_1^2  =  \cfrac{\sum_{i \neq j } X_i X_j }{n (n-1)}$.  If the observations $(X_i)_{i=1,\ldots,n}$ are independent and identically distributed and if we denote by $\mu_i^c$ the central moments of the population, this simplifies into:
\begin{equation}
\text{Var}\left[ s_n^2 \right]= \frac{ \mu_4^c}{n} -\frac{ (n-3) (\mu_2^c)^2 }{n(n-1)} 
\end{equation}
If the observations are from an i.i.d. normal distribution, this results in the traditional result
\begin{equation}
\text{Var}\left[ s_n^2 \right]= \frac{ 2 \sigma^4 }{n-1} 
\end{equation}

\end{lemma}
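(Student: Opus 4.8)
The plan is to use the elementary identity $\text{Var}\left[ s_n^2 \right] = \mathbb{E}\left[ s_n^4 \right] - (\mathbb{E}\left[ s_n^2 \right])^2$ and simply combine the two preceding lemmas. Lemma~\ref{moments_2} supplies $\mathbb{E}\left[ s_n^4 \right]$ directly. For the subtracted term, I would observe that with the notation of the statement one has $\mathbb{E}[\tilde{\mu}_2] = \frac{1}{n}\sum_i \mathbb{E}[X_i^2]$ and $\mathbb{E}[\tilde{\mu}_1^2] = \frac{1}{n(n-1)}\sum_{i\neq j}\mathbb{E}[X_iX_j]$, so that Lemma~\ref{moments_1} reads $\mathbb{E}\left[ s_n^2 \right] = \mathbb{E}[\tilde{\mu}_2] - \mathbb{E}[\tilde{\mu}_1^2]$. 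Squaring gives $(\mathbb{E}\left[ s_n^2 \right])^2 = (\mathbb{E}[\tilde{\mu}_2])^2 - 2\,\mathbb{E}[\tilde{\mu}_1^2]\,\mathbb{E}[\tilde{\mu}_2] + (\mathbb{E}[\tilde{\mu}_1^2])^2$, and subtracting this from the expression in Lemma~\ref{moments_2} line by line produces exactly \eqref{lemma_mom_eq22}. No genuine computation is needed here beyond matching the three correction terms to the three lines; the only thing to watch is not to confuse $\tilde{\mu}_2$ (a true average of the $X_i^2$) with $\hat{\mu}_2^2$ (an average over ordered pairs), since the normalisations differ.

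For the i.i.d. specialisation I would first exploit the translation invariance of $s_n^2$: replacing $X_i$ by $X_i - \mathbb{E}[X_1]$ changes neither $s_n^2$ nor its variance, so we may assume $\mu_1 = 0$, in which case every raw moment coincides with the corresponding central moment, $\mu_2 = \mu_2^c$ and $\mu_4 = \mu_4^c$. Under independence together with the mean-zero reduction, every term in \eqref{lemma_mom_eq22} carrying an isolated factor $X_i$ to an odd power over a distinct index vanishes in expectation: this kills $\mathbb{E}[\hat\mu_1\hat\mu_3]$, $\mathbb{E}[\hat\mu_1^2\hat\mu_2]$, $\mathbb{E}[\hat\mu_1^4]$ and $\mathbb{E}[\tilde\mu_1^2]$, hence also the two cross terms $2\,\mathbb{E}[\tilde\mu_1^2]\mathbb{E}[\tilde\mu_2]$ and $(\mathbb{E}[\tilde\mu_1^2])^2$. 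What remains is $\mathbb{E}[\hat\mu_4] = \mu_4^c$, $\mathbb{E}[\hat\mu_2^2] = (\mu_2^c)^2$ (by independence), and $\mathbb{E}[\tilde\mu_2] = \mu_2^c$, leaving $\text{Var}\left[ s_n^2 \right] = \frac{\mu_4^c}{n} + \frac{(n^2-2n+3)(\mu_2^c)^2}{n(n-1)} - (\mu_2^c)^2$. Collecting the $(\mu_2^c)^2$ terms over the common denominator $n(n-1)$ and using $n^2 - 2n + 3 - n(n-1) = 3 - n$ yields the claimed $\frac{\mu_4^c}{n} - \frac{(n-3)(\mu_2^c)^2}{n(n-1)}$.

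For the Gaussian case I would simply substitute the central moments of $\mathcal{N}(\mu,\sigma^2)$, namely $\mu_2^c = \sigma^2$ and $\mu_4^c = 3\sigma^4$, into the previous line: $\text{Var}\left[ s_n^2 \right] = \frac{3\sigma^4}{n} - \frac{(n-3)\sigma^4}{n(n-1)} = \frac{\sigma^4\bigl(3(n-1)-(n-3)\bigr)}{n(n-1)} = \frac{2n\sigma^4}{n(n-1)} = \frac{2\sigma^4}{n-1}$.

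I expect the only delicate step to be the bookkeeping in the first paragraph — distributing the subtraction of $(\mathbb{E}\left[ s_n^2 \right])^2$ correctly across the three lines of Lemma~\ref{moments_2} and keeping the tilde- versus hat-normalisations straight — rather than anything in the two special cases, which are routine substitutions once the vanishing of the odd-order terms under the mean-zero reduction has been noted.
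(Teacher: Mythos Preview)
Your proposal is correct and follows essentially the same route as the paper: both use $\text{Var}[s_n^2]=\mathbb{E}[s_n^4]-(\mathbb{E}[s_n^2])^2$, square the expression from Lemma~\ref{moments_1}, and subtract it term by term from Lemma~\ref{moments_2}. For the i.i.d.\ specialisation, your explicit appeal to the translation invariance of $s_n^2$ (reducing to the mean-zero case so that raw moments become central moments) is slightly cleaner than the paper's phrasing, but the resulting vanishing pattern and the final arithmetic are identical; the Gaussian case is handled the same way in both.
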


\begin{proof} See proof \ref{proof_mom_22}
\end{proof}

Like previous results, equation (\ref{lemma_mom_eq22}) is the most general one and encompasses cases where observations are not necessarily independent nor identically distributed. To our knowledge, these results are new and give as a byproduct all standard results about the first, second and variance of the sample mean that can be found in textbook like \cite{Casella_2002}.

\section{Relationship between sample mean and variance}
We finally tackle the question of the condition for the sample mean and variance to be independent. This is a strong result that for instance enables us to derive the Student distribution as in the normal case of iid variables, the sample mean and variance are clearly independent. We are interested in the opposite. What is the condition to impose on our distribution for iid variable to make our sample mean and variance independent? We shall prove that it is only in the case of normal distribution that these two estimators are independent as the following proposal states

\begin{proposition}
The sample mean and variance are independent if and only if the underlying (parent) distribution is normal.
\end{proposition}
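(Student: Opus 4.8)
The plan is to prove the harder direction—that independence of $\bar X_n$ and $s_n^2$ forces the parent distribution to be normal—via characteristic functions, following the classical Geary/Lukacs line of argument but organized around the hint given in the abstract (a "faster proof $\ldots$ by using the log characteristic function of the unbiased sample variance estimator"). The easy direction, that a normal parent yields independence, is standard: for i.i.d.\ normals one writes $(X_1,\ldots,X_n)^T$ in an orthonormal basis whose first vector is $\tfrac{1}{\sqrt n}(1,\ldots,1)^T$, so that $\bar X_n$ depends only on the first coordinate and $s_n^2$ only on the remaining $n-1$ coordinates, which are independent by rotational invariance of the standard Gaussian. I would dispatch this in two or three lines and then turn to the converse.

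For the converse, let $\varphi(t)=\mathbb{E}[e^{itX}]$ be the common characteristic function, which we may assume nonvanishing in a neighborhood of $0$, and set $\psi=\log\varphi$ there. The first step is to translate the assumed independence into a functional equation. Using the U-statistic form of Lemma~\ref{lemma_u_stat}, $s_n^2=\tfrac{1}{n(n-1)}\sum_{i\neq j}\tfrac{(X_i-X_j)^2}{2}$, I would consider the joint transform $\mathbb{E}\big[e^{i u \bar X_n}\,e^{iv\, s_n^2}\big]$, or—what is cleaner for extracting a differential identity—differentiate the independence relation
\[
\mathbb{E}\big[s_n^2\, e^{iu\bar X_n}\big]=\mathbb{E}[s_n^2]\,\mathbb{E}\big[e^{iu\bar X_n}\big]
\]
with respect to $u$, which only requires the second moment to exist. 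Writing $s_n^2$ in the symmetric form and using $\sum_k X_k = n\bar X_n$, the left-hand side expands into terms of the type $\mathbb{E}[X_j X_k\, e^{i(u/n)\sum_\ell X_\ell}]$, each of which factorizes over the i.i.d.\ coordinates into products of $\varphi(u/n)$, $\varphi'(u/n)$, $\varphi''(u/n)$. The upshot is an identity, for all real $u$ near $0$, relating $\varphi,\varphi',\varphi''$ evaluated at $u/n$; dividing through by $\varphi(u/n)^n$ and rewriting in terms of $\psi$ turns it into an ordinary differential equation for $\psi$. The key cancellations should leave something equivalent to $\psi''(t)=\text{const}$, i.e.\ $\psi(t)=i\mu t-\tfrac12\sigma^2 t^2$, which is exactly the Gaussian log-characteristic function; by the uniqueness theorem for characteristic functions this identifies the parent law as $\mathcal N(\mu,\sigma^2)$ (the degenerate case $\sigma^2=0$ being a point mass, trivially consistent).

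The main obstacle is the bookkeeping in that expansion: one must carefully separate the diagonal-type contributions ($j=k$) from the genuinely off-diagonal ones, track the multiplicities $n$, $n-1$, etc., and verify that all the terms that are \emph{not} of the form "$\psi''$ times lower-order data" cancel identically in $u$—this is where the normality is really being forced, and where an error would silently let in a spurious family of solutions. A secondary technical point is the nonvanishing of $\varphi$: independence of $\bar X_n$ and $s_n^2$ combined with the existence of the first two moments (which one gets a priori, since $s_n^2$ has finite mean and the relation above makes sense) lets one argue that $\varphi(t)\neq 0$ on the real line—otherwise $\psi$ is only defined locally and one propagates the ODE solution outward from $0$ by a connectedness argument. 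I would also remark that the same computation, read off at $u=0$, recovers the unbiasedness statement $\mathbb{E}[s_n^2]=\mu_2-\mu_1^2$ of Lemma~\ref{moments_1}, which is a useful consistency check on the algebra.
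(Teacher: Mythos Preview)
Your proposal is correct and follows essentially the same route as the paper: the identity $\mathbb{E}[s_n^2\,e^{iu\bar X_n}]=\mathbb{E}[s_n^2]\,\mathbb{E}[e^{iu\bar X_n}]$ that you single out is exactly what the paper obtains by differentiating the joint log-characteristic function in the $s_n^2$-variable at $0$, and both arguments then expand the left-hand side via the U-statistic form of $s_n^2$, factor over the i.i.d.\ coordinates into $\varphi,\varphi',\varphi''$ at $u/n$, and reduce to $\psi''\equiv -\sigma^2$. Your write-up is a bit more careful than the paper's on side issues (the easy direction, the nonvanishing of $\varphi$), but the core mechanism is identical.
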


\begin{proof} This result was first proved by \cite{Geary_1936} and later by \cite{Lukacs_1942}. We provide in \ref{normal} a proof that uses modern notations. It is an adaptation of the  proof  in \cite{Lukacs_1942} but with a simpler approach as we work with the log characteristic function and the unbiased sample variance. This makes the resulting differential equation trivial to solve as this is just a constant second order derivative constraint.
\end{proof}

This result implies consequently that it will not easy to derive the underlying distribution of the t-statistic for a non normal distribution. Indeed the t-statistic is defined as the ratio of the sample mean over the sample variance. If the sample mean and sample variance are not independent, the computation of the underlying distribution does not decouple. This makes the problem of the computation of the underlying distribution an integration problem that has no closed form. This kills in particular our hope to derive other distribution that generalizes the case of the Student distribution to non normal underlying assumptions.

%%%%%%%%%%%
%%% CONCLUSION
%%%%%%%%%%%

\section{Conclusion}
In this paper, we have derived the most general formula for the fist, second moment and variance of the sample variance. Our formula does not assume that the underlying sample is independent neither identically distributed. We also show that for an i.i.d. sample, the independence between the sample mean and variance is characteristic of the normal distribution. Possible extensions are to computer higher moments for the sample variance.

\clearpage

%%%%%%%%%%%
%%% APPENDIX
%%%%%%%%%%%
\appendix
\section{Symmetry for the sample variance}\label{proof_u_stat}
Let us first prove equation
\begin{equation}\label{lemma_eq2}
 \frac{1}{n-1}\sum_{i=1}^{n}(X_i - \bar{X}_n)^2= \frac{1}{n-1}\left( \sum_{i=1}^{n}X_i^2 - n  \bar{X}_n^2 \right)
\end{equation}

with $\bar{X}_n$  defined by equation (\ref{definition}). Expanding the left hand side (LHS) leads to
\begin{eqnarray}
 LHS & =&  \frac{1}{n-1}\sum_{i=1}^{n} ( X_i^2 + \bar{X}_n^2 - 2 X_i \bar{X}_n ) \\
& = & \frac{1}{n-1} \left( \sum_{i=1}^{n} X_i^2 + n \bar{X}_n^2 - 2 \bar{X}_n \sum_{i=1}^{n} X_i \right) \\
& = & \frac{1}{n-1} \left( \sum_{i=1}^{n} X_i^2 - n \bar{X}_n^2 \right)
\end{eqnarray}

We want to prove equation (\ref{lemma_eq1}):
\begin{equation}
 \frac{1}{n-1}\sum_{i=1}^{n}(X_i - \bar{X}_n)^2= \frac{1}{n(n-1)}\sum_{i,j=1}^{n}\frac{(X_i-X_j)^2}{2} = \frac{1}{n(n-1)} \sum_{i \neq j}\frac{(X_i-X_j)^2}{2},
\end{equation}

Note that the forms where we sum over all pairs of $i,j$ and where we sum over all pairs that are different $i \neq j$ are equal (middle and right hand side) as the missing terms between the two sides are equal to zero. 
Some routine algebraic reduction on the middle hand side (MHS) gives:
\begin{eqnarray}
MHS & =& \frac{1}{n(n-1)}\sum_{i,j=1}^{n}\frac{X_i^2+X_j^2 - 2X_i X_j}{2} \\
& =&  \frac{1}{n(n-1)} \frac{2 n \sum_{i=1}^{n}X_i^2} {2} - \frac{1}{n(n-1)} \sum_{i,j=1}^{n} X_i X_j \\
& =&  \frac{1}{n-1}\sum_{i=1}^{n}X_i^2 - \frac{1}{n(n-1)} (n \bar{X}_n)^2 \\
& =&  \frac{1}{n-1} \left( \sum_{i=1}^{n}X_i^2 - n \bar{X}_n^2  \right) 
\end{eqnarray}

We can easily conclude using equation  (\ref{lemma_eq2})
\qed

\section{Moment of sample variance}
\subsection{First moment of sample variance}\label{proof_mom_1}
The result of lemma \ref{moments_1} is immediate expanding lemma \ref{lemma_u_stat} equality:
\begin{eqnarray}
s_n^2 &= & \frac{1}{n(n-1)} \sum_{i \neq j}\frac{(X_i-X_j)^2}{2} \\
&= & \frac{1}{n(n-1)} \left( (n-1) \sum_{i=1}^n X_i^2 - \sum_{i \neq j} X_i X_j  \right) ,
\end{eqnarray}

and taking the expectation. The case of i.i.d. variables is also trivial as independence implies
$$
\mathbb{E}\left[X_i X_j \right] = \mathbb{E}\left[X_i \right] \mathbb{E}\left[ X_j \right]  
$$

The identically distributed assumption implies $\mathbb{E} \left[ X_i^2 \right] = \mu_2$ and  $\mathbb{E}\left[ X_i \right] = \mu_1$. Finally, we have 
$$
\sum_{i=1}^n \mathbb{E} \left[ X_i^2 \right] = n \mu_2, \quad \text{and} \quad  \sum_{i \neq j}  \mathbb{E}\left[ X_i \right]   \mathbb{E}\left[ X_j \right] = n(n-1) \mu_1^2
$$
\qed.

\subsection{Application to AR(1)}\label{proof_mom_1_app}
Lemma \ref{moments_1} can be rewritten as 
\begin{equation}
\mathbb{E}\left[ s_n^2 \right] = \frac{\sum_{i=1}^n \mathbb{E} \left[ X_i^2 \right] }{n} -  \frac{  \sum_{i \neq j}  \mathbb{E}\left[ X_i \right]   \mathbb{E}\left[ X_j \right]  }{n(n-1)} - \frac{1}{n(n-1)} \sum_{i \neq j} \left(\mathbb{E}\left[X_i X_j \right] - \mathbb{E}\left[X_i \right] \mathbb{E}\left[ X_j \right] \right)
\end{equation}

In the case of an AR(1) process, we have
\begin{equation}
\mathbb{E}\left[X_i X_j \right] - \mathbb{E}\left[X_i \right] \mathbb{E}\left[ X_j \right]  =\frac{\sigma^2 }{1-\rho^2} \rho^{ \vert i-j  \vert}
\end{equation}

Hence, the term due to non independent is computed as follows:
\begin{eqnarray}
 \frac{1}{n(n-1)} \sum_{i \neq j} \left(\mathbb{E}\left[X_i X_j \right] - \mathbb{E}\left[X_i \right] \mathbb{E}\left[ X_j \right] \right) &= & \frac{\sigma^2 }{1-\rho^2}  \frac{ \sum_{i \neq j} \rho^{ \vert i-j \vert} }{n(n-1)}
\end{eqnarray}

To conclude, one can use that 
\begin{align}
\sum_{i=1}^{n} (n-i ) \rho^{i} = \rho \frac{ n (1-\rho) - (1-\rho^{n}) }{(1-\rho)^2}
\end{align}

\qed.

\subsection{Second moment of sample variance}\label{proof_mom_2}
Let us do some routine algebraic computation. We have
\begin{eqnarray}
s_n^4 &= & \frac{1}{n^2(n-1)^2} \left( (n-1) \sum_{i=1}^n X_i^2 - \sum_{i \neq j} X_i X_j  \right)^2 \\
& =& \frac{1}{n^2(n-1)^2} \left( (n-1)^2 (\sum_{i=1}^n X_i^2)^2 + (\sum_{i \neq j} X_i X_j)^2 -2 (n-1) (\sum_{i=1}^n X_i^2 )( \sum_{k \neq l} X_k X_l ) \right) \nonumber
\end{eqnarray}

Let us expand. The first expansion $(\sum_{i=1}^n X_i^2)^2 $ is easy and immediate:
\begin{eqnarray}
(\sum_{i=1}^n X_i^2)^2 & = &\sum_{i=1}^n X_i^4 + \sum_{i \neq j} X_i^2 X_j^2 
\end{eqnarray}

In the expansion of $(\sum_{i \neq j} X_i X_j)^2$ we have that the squared terms are with same indexes ($(i \neq j) = (k \neq l)$). The cross terms are $(i \neq j), (k \neq l)$ with the constraint that they are different ($(i,j) \neq (k,l)$). There are three possibilities for these cross terms. These cross terms can either be only two real indexes $(i \neq j)$ and $(j \neq i)$ or vice versa leading to two times the squared terms, or we have that $(i,j,k,l)$ are in fact only three numbers and this can happen 4 times as it is either $j,j,k$ or $l$ that coincides with the other indexes or there all different, and this can happen only once. Hence, we have:
\begin{eqnarray}
(\sum_{i \neq j} X_i X_j)^2& = & 3 \sum_{i \neq j} X_i^2 X_j^2 + 4 \sum_{i \neq j \neq k } X_i^2 X_j X_k +  \sum_{i \neq j \neq k \neq l}  X_i X_j X_k X_l  
\end{eqnarray}

To expand $(\sum_{i=1}^n X_i^2 )( \sum_{k \neq l} X_k X_l ) $, we can notice that either there is no intersection of indexes between $i,j$ and $k$, or $i$ coincides with either $k$ or $l$. And this can happen 4 times. So the expansion is given by
\begin{eqnarray}
(\sum_{i=1}^n X_i^2 )( \sum_{k \neq l} X_k X_l )  & = & \sum_{i \neq j \neq k } X_i^2 X_j X_k  + 4 \sum_{i \neq j } X_i^3 X_j
\end{eqnarray}

Regrouping terms leads to
\begin{eqnarray}\label{int_result}
s_n^4 &= & \frac{\frac{\sum_{i=1}^n X_i^4 }{n}  }{n}  -  \frac{  \frac{ 4 \sum_{i \neq j } X_i^3 X_j }{n (n-1)}}{n} + \frac{\frac{(3 + (n-1)^2) \sum_{i \neq j} X_i^2 X_j^2 }{n (n-1)}}{n(n-1)} \nonumber \\
& & \quad - \frac{\frac{(2 (n-1)(n-2) - 4 (n-2)) \sum_{i \neq j \neq k} X_i^2 X_j X_k }{n (n-1)(n-2) }}{n(n-1)}  
+ \frac{\frac{(n-2)(n-3) \sum_{i \neq j \neq k \neq l} X_i X_j X_k  X_l }{n (n-1)(n-2) (n-3)}}{n(n-1)}
\end{eqnarray}

We can conclude by using the notation given in \ref{notations} and taking the expectation \qed

\subsection{Variance of sample variance}\label{proof_mom_22}
The variance of the sample variance writes 
\begin{eqnarray}
\text{Var}[s_n^2] = \mathbb{E}[s_n^4] - (\mathbb{E}[s_n^2])^2.
\end{eqnarray}

Using lemma \ref{moments_1}, the square of the expectation of $s_n^2$ writes:

\begin{eqnarray}
(\mathbb{E}\left[ s_n^2 \right] )^2  & = & \left(  \mathbb{E} \left[ \frac{\sum_{i=1}^n X_i^2 }{n} \right] - \mathbb{E}\left[ \frac{  \sum_{i \neq j}X_i X_j  }{n(n-1)} \right] \right)^2 \\
& = & \left(\mathbb{E} \left[ \frac{\sum_{i=1}^n X_i^2 }{n} \right] \right)^2 -2 \mathbb{E}\left[ \frac{  \sum_{i \neq j}X_i X_j  }{n(n-1)} \right] \mathbb{E} \left[ \frac{\sum_{i=1}^n X_i^2 }{n} \right]  \nonumber \\
&  &  \quad\quad\quad\quad\quad+  \left(\mathbb{E}\left[ \frac{  \sum_{i \neq j}X_i X_j  }{n(n-1)} \right]\right)^2
\end{eqnarray}

Combining this result with \ref{moments_2}, leads to equation (\ref{lemma_mom_eq22}). To use central moment, it is equivalent to impose that observations have null odd moments. Adding the condition of i.i.d observations implies that the following equalities:

\begin{align} 
\hat{\mu}_4 &= \mu_4^c & \hat{\mu}_3 \hat{\mu}_1 &= 0 & \hat{\mu}_2 ^2 & = (\mu_2^c)^2 \\
\tilde{\mu}_2 & = \mu_2^c &  \tilde{\mu}_1^2 & =  0 & \hat{\mu}_1^2 \hat{\mu}_2 & = 0 & \hat{\mu}_1^4 & =  0
\end{align}

Hence, we get 
\begin{equation}
\text{Var}\left[ s_n^2 \right]= \frac{ \mu_4^c}{n} -\frac{ (n-3) (\mu_2^c)^2 }{n(n-1)} 
\end{equation}

If the observations are from an i.i.d. normal distribution with zero mean and a variance $\sigma^2$, we have 
$\mu_4^c  =3 \sigma^4 $ and $ \mu_2^c  = \sigma^2$ which leads to the result \qed.

\clearpage

\section{Proof of the condition for sample mean and variance to be independent}\label{normal} 
The assumption of i.i.d. sample for  $(x_1,\ldots, c_n)$ implies that the joint distribution of $(x_1,\ldots, x_n)$ denoted by
$f_{X_1,\ldots,X_n}(x_1,\ldots, x_n)$ is equal to $\prod_{i=1}^n f_X(x_i)$, which we will write $\prod_{i=1}^n f(x_i)$ dropping the $._{X}$ to make notation lighter.

The log of the characteristic function of the joint variable $(\bar X_n,s_n^2)$ writes
\begin{equation}
\ln( \phi_{(\bar X_n,s_n^2)}(t_1, t_2) ) = \ln \left( \iiint e^{i t_1 \bar x_n + i t_2 s_n^2} \prod_{i=1}^n f(x_i) dx_i \right).
\end{equation}

Similarly, the log of the characteristic function for the sample mean $\bar X_n$  writes 
\begin{equation}
\ln( \phi_{\bar X_n} (t_1) ) =\ln \left(  \iiint e^{i t_1 \bar x_n } \prod_{i=1}^n f(x_i) dx_i \right),
\end{equation}
and similarly for the sample variance
\begin{equation}
\ln( \phi_{s_n^2}(t_2) )= \ln \left(  \iiint e^{ i t_2 s_n^2} \prod_{i=1}^n f(x_i) dx_i \right).
\end{equation}

The assumption of independence between  sample mean $\bar X_n$ and variance $s_n^2$ is equivalent to the fact that the characteristic function of the couple decouples, or that the log characteristic functions sum up.

\begin{equation} \label{condition_1}
\ln( \phi_{(\bar X_n,s_n^2)}(t_1, t_2) ) = \ln( \phi_{\bar X_n} (t_1) ) + \ln( \phi_{s_n^2}(t_2) ).
\end{equation}

Differentiating condition \ref{condition_1} with respect to $t_2$ in $t_2=0$ leads to
\begin{equation} \label{condition_2}
\frac{1}{\phi_{(\bar X_n,s_n^2)}(t_1, t2)} \left. \frac{ \partial \phi_{(\bar X_n,s_n^2)}(t_1, t_2) }{\partial t_2} \right|_{t_2=0} =\frac{1}{\phi_{s_n^2}(t2)}  \left. \frac{ \partial   \phi_{s_n^2}(t_2) }{\partial t_2}  \right|_{t_2=0}.
\end{equation}

Noticing that $\phi_{s_n^2}(0) = 1$ and $\phi_{(\bar X_n,s_n^2)}(t_1, 0) = \phi_{\bar X_n} (t_1)$, the condition \ref{condition_1} writes
\begin{equation} \label{condition_2}
\frac{1}{\phi_{\bar X_n}(t_1)} \left. \frac{ \phi_{(\bar X_n,s_n^2)}(t_1, t_2) }{\partial t_2} \right|_{t_2=0} =\left. \frac{ \partial   \phi_{s_n^2}(t_2) }{\partial t_2}  \right|_{t_2=0}.
\end{equation}

Using the fact that $\bar X_n= \frac{ \sum_{i=1}^n X_i}{n}$, it is easy to see that 

\begin{equation}
 \phi_{\bar X_n} (t_1) = \prod _{i=1}^n \int e^{i t_1 x_i / n} f(x_i) dx_i = [\phi_{X} (t_1 / n)] ^n
\end{equation}

For the sample variance, we can use the "U-statistic" (or symmetric) form as shown in lemma \ref{lemma_u_stat}, to see that 
\begin{equation}
s_n^2 = \frac{\sum_{i =1}^n X_i^2} {n}  - \frac{\sum_{i \neq j} X_i X_j}{n (n-1)}
\end{equation}

Hence, the derivative of the characteristic function of the couple $(\bar X_n, s_n^2)$ writes 
\begin{align}
\left. \frac{ \partial \phi_{(\bar X_n,s_n^2)}(t_1, t_2) }{\partial t_2} \right|_{t_2=0}  & = \iiint i s_n^2  \prod_{i=1}^n e ^{ i t_1 x_i / n } f(x_i) dx_i   \\
& = i \iiint \left( \frac{\sum_{i =1}^n x_i^2} {n}-  \frac{\sum_{i \neq j} x_i x_j}{n (n-1)}  \right)  \prod_{i=1}^n e ^{ i t_1 x_i / n } f(x_i) dx_i   \\
& = i  [\phi_{X}(\frac{t_1}{n})] ^{n-2} \left( \phi_{X} (\frac{t_1}{n}) \int x^2 e ^{\frac{i  t_1 x}{n}} f(x) dx - (\int x^2 e ^{\frac{i t_1 x}{n}} f(x) dx )^2  \right) 
\end{align}

In the latter equation, if we set $t_1=0$, we get in particular that 
\begin{align}
\left. \frac{ \partial \phi_{s_n^2}(t_1, t_2) }{\partial t_2} \right|_{t_2=0}  & =\left. \frac{ \partial \phi_{(\bar X_n,s_n^2)}(0, t_2) }{\partial t_2} \right|_{t_2=0}  =i \sigma^2
\end{align}

Hence, condition (\ref{condition_2}) writes
\begin{equation} \label{condition_3}
\frac{ \phi_{X} (\frac{t_1}{n}) \int x^2 e ^{\frac{i t_1 x}{n}} f(x) dx - (\int x^2 e ^{\frac{i t_1 x}{n}} f(x) dx )^2 }{[\phi_{X}(\frac{t_1}{n})]^2} = \sigma^2
\end{equation}

We also have that the derivative of the characteristic function $\phi_{X} (\frac{t_1}{n})$ with respect to $u = t_1 / n$ gives
\begin{align}\label{derivative_2}
\frac{ \partial \phi_{X} (u)}{\partial u} & =  \int  i x e ^{i x u} f(x) dx
\end{align}

To simplify notation, we drop the index in $\phi_X$ and writes this function $\phi$. Using equation  (\ref{derivative_2}), condition (\ref{condition_3}) writes 
\begin{align}\label{condition_4}
\frac{ - \phi (u) \frac{ \partial^2 \phi (u)}{\partial u^2} + \left( \frac{ \partial \phi (u)}{\partial u} \right)^2}{\phi (u)^2}  = \sigma^2
\end{align}

The log of the characteristic function of $\phi(u) = \mathbb{E}[ e^{i u X} ]$, denoted by $\Psi(u) =\ln \phi (u)$, first and second derivatives with respect to $u$ are given by:
\begin{align}
\frac{ \partial \Psi(u)}{\partial u} & = \frac{ \partial \ln \phi (u)}{\partial u}  = \frac{1}{ \phi (u)} \frac{ \partial  \phi (u)}{\partial u} &  \quad\\
\frac{ \partial^2 \Psi(u)}{\partial^2 u} & =   \frac{ \partial }{\partial u}  \frac{ \partial \Psi(u)}{\partial u} =  \frac{1}{ \phi (u)} \frac{ \partial^2  \phi (u)}{\partial u^2}  - \frac{1}{ \phi (u)^2 }  \left( \frac{ \partial  \phi (u)}{\partial u} \right)^2 &  \quad
\end{align}

Hence,  condition (\ref{condition_4}) writes 
\begin{align}\label{condition_5}
\frac{ \partial^2 \Psi(u)}{\partial^2 u}   = -\sigma^2
\end{align}

Using the boundary conditions $ \Psi(0)= 0$ and $\Psi^{'}(0)= i  \mathbb{E}[X] = i \mu$, it is easy to integrate condition \ref{condition_5} which is a constant second order derivative to get
\begin{align}\label{condition_6}
\Psi(u)  =i  \mu u  - \frac{ \sigma^2 u^2}{2}
\end{align}

Condition \ref{condition_6} states that a necessary and sufficient condition for the sample mean and variance to be independent is that the log characteristic function of $X$ is a quadratic form. But a quadratic form for the log characteristic function of $X$  is a characterization of a normal distribution, which concludes the proof. \qed

\clearpage

% Bibliography.
\bibliographystyle{jfe}
\bibliography{mybib}

\end{document}